\documentclass[12pt,letterpaper]{article}
\pagestyle{headings}

\usepackage{amsfonts,amssymb,latexsym,amsthm}

\newtheorem{theorem}{Theorem}
\newtheorem{lemma}[theorem]{Lemma}
\newtheorem{corollary}[theorem]{Corollary}
\newtheorem{question}[theorem]{Question}
\newtheorem{example}[theorem]{Example}
\newtheorem*{remark}{Remark}

\begin{document}

\author{Ramiro de la Vega \footnote{Universidad de los Andes, Bogot\'a,
Colombia, \ \ rade@uniandes.edu.co}}
\title{On the reflection of the countable chain condition
\footnote{ 2010 MSC: Primary 54A25, 54G20, 54D30 Secundary 54A10.
Key Words and Phrases: Countable chain condition, reflection, nonreflection.}
}

\maketitle

\begin{abstract}
We study the question of when an uncountable ccc topological space
$X$ contains a ccc subspace of size $\aleph_1$. We show that it
does if $X$ is compact Hausdorff and more generally if $X$ is
Hausdorff with $\mathrm{pct}(X) \leq \aleph_1$. For each regular
cardinal $\kappa$, an example is constructed of a ccc Tychonoff
space of size $\kappa$ and countable pseudocharacter but with no
ccc subspace of size less than $\kappa$. We also give a ccc
compact $T_1$ space of size $\kappa$ with no ccc subspace of size
less than $\kappa$.
\end{abstract}

A topological space $(X,\tau)$ has \emph{the countable chain
condition} ($X$ is \emph{ccc}) if for any uncountable $\mathcal{F}
\subseteq \tau$ there are distinct $U,V \in \mathcal{F}$ such that
$U \cap V \neq \emptyset$. We are interested in the following

\begin{question}\label{reflection}
Does any uncountable ccc topological space contain an uncountable
ccc subspace of size $\aleph_1$?
\end{question}

The corresponding questions for \emph{second-countability} (in
place of ccc) and for  \emph{separability} have trivial
affirmative answers. The question for \emph{Lindel\"ofness} is
highly non-trivial and has recieved substantial attention (see,
for instance, \cite{BT}, \cite{HJ}, \cite{KT} and \cite{S}). It
was claimed in the first paragraph of \cite{BT} that the answer to
Question \ref{reflection} is affirmative ``by a standard easy
L\"owenheim-Skolem argument." It turns out that this is not the
case as the following simple example shows.

\begin{example}\label{hausdorffexample}
For each infinite cardinal $\kappa$, there is a Hausdorff ccc
space with no uncountable ccc subspace of size less than
$2^\kappa$.
\end{example}
\begin{proof}
Let $X=2^\kappa$ with the topology $\tau$ generated by sets of the
form $P \cap C$ where $P$ is basic clopen in the product topology
and $C \subseteq X$ with $|X \setminus C|<2^\kappa$. Note that the
generating set is closed under finite intersections so it is in
fact a base for $\tau$.

Clearly $\tau$ contains the product topology on $X$ so $(X,\tau)$
is a Hausdorff space. Moreover since any clopen in the product
topology has size $2^\kappa$ we have that for any two basic open
sets $P_1 \cap C_1$ and $P_2 \cap C_2$, $$(P_1 \cap C_1) \cap (P_2
\cap C_2) \neq \emptyset \iff P_1 \cap P_2 \neq \emptyset.$$ It
follows that $(X,\tau)$ is ccc (since the product topology is
ccc).

Finally if $Y$ is a subspace of $X$ with $|Y|<2^\kappa$ then $Y$
is discrete. To see this just note that given $y \in Y$ we can
take $C=(X \setminus Y) \cup \{y\}$ and since $|X \setminus C|=|Y
\setminus \{y\}|<2^\kappa$, we have that $\{y\}=Y \cap (X \cap C)$
is open in $Y$. Hence $Y$ is not ccc unless $Y$ is countable.
\end{proof}

A standard way to approach Question \ref{reflection} would be to
take an elementary submodel $M$ of (a large enough fragment of)
the universe with $\omega_1 \cup \{X,\tau\} \subseteq M$ and
$|M|=\aleph_1$, and then try to prove that the subspace $X \cap M$
is ccc (see \cite{dow} for more information on elementary
submodels). By elementarity one can easily see that $X\cap M$ with
the topology $\tau_M$ generated by $\{U \cap M : U \in \tau \cap M
\}$ is ccc. However $\tau_M$ is usually strictly coarser than the
subspace topology on $X\cap M$ (see \cite{JT} for many examples of
this phenomenon), so this gives us a positive answer only in cases
where we can guarantee that the two topologies coincide. It is
well known and easy to see by elementarity that this is the case
whenever $\chi(X) \subseteq M$, so we have

\begin{theorem}\label{firstcountable}
Any uncountable ccc space of character at most $\aleph_1$ contains
a ccc subspace of size $\aleph_1$.
\end{theorem}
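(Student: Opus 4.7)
The plan is to carry out the elementary submodel argument sketched before the statement, using the hypothesis $\chi(X) \le \aleph_1$ to pass between open sets in $X$ and open sets in the subspace $X \cap M$. Fix a sufficiently large regular cardinal $\theta$ and choose $M \prec H(\theta)$ with $|M| = \aleph_1$, $\omega_1 \subseteq M$, and $\{X,\tau\} \in M$; let $Y := X \cap M$, equipped with the subspace topology. To see $|Y| = \aleph_1$, use that $X$ is uncountable and apply elementarity to obtain an injection $f : \omega_1 \to X$ lying in $M$; since $\omega_1 \subseteq M$ and $f \in M$ we get $f[\omega_1] \subseteq Y$, while $|Y| \le |M| = \aleph_1$.

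The key preparatory step is to show that for every $p \in Y$ there is a local base at $p$ (in $X$) contained in $\tau \cap M$. By elementarity, $M$ contains a function $x \mapsto \mathcal{B}_x$ assigning to each $x \in X$ a local base of size at most $\aleph_1$ (available because $\chi(X) \le \aleph_1$). For $p \in Y$, both $p$ and the assignment lie in $M$, so $\mathcal{B}_p \in M$ and $|\mathcal{B}_p| \le \aleph_1$; enumerating $\mathcal{B}_p$ inside $M$ by some ordinal $\le \omega_1$ (whose elements are all in $M$) then yields $\mathcal{B}_p \subseteq M$, hence $\mathcal{B}_p \subseteq \tau \cap M$.

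To show $Y$ is ccc, suppose toward a contradiction that $\{F_\alpha : \alpha < \omega_1\}$ is a pairwise disjoint family of nonempty open subsets of $Y$. For each $\alpha$ pick $p_\alpha \in F_\alpha$ and then $V_\alpha \in \mathcal{B}_{p_\alpha}$ with $V_\alpha \cap Y \subseteq F_\alpha$; by the previous step each $V_\alpha \in \tau \cap M$. The traces $V_\alpha \cap Y$ are pairwise disjoint and nonempty, so the $V_\alpha$ themselves form an uncountable subfamily of $\tau$, and by ccc of $X$ there exist $\alpha \ne \beta$ with $V_\alpha \cap V_\beta \ne \emptyset$. Since $V_\alpha, V_\beta \in M$, the set $V_\alpha \cap V_\beta$ is a nonempty element of $M$, so by elementarity it contains a point $q \in M$; then $q \in V_\alpha \cap V_\beta \cap Y \subseteq F_\alpha \cap F_\beta$, contradicting disjointness.

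The main obstacle I anticipate is the second step, the containment $\mathcal{B}_p \subseteq M$. This is precisely where $\chi(X) \le \aleph_1$ combined with $\omega_1 \subseteq M$ is used, and where the strategy breaks down for larger character: a local base in $M$ of size $\ge \aleph_2$ need not be a subset of $M$, so traces of arbitrary open sets in $X$ may fail to be open in the coarser topology $\tau_M$, and the refinement performed in the final step becomes impossible.
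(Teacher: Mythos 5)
Your proof is correct and is exactly the elementary submodel argument the paper sketches (the paper merely asserts the theorem as "well known and easy to see by elementarity," with the key point being that $\chi(X)\subseteq M$ forces the subspace topology on $X\cap M$ to coincide with the one generated by $\tau\cap M$). Your step showing $\mathcal{B}_p\subseteq M$ via an enumeration indexed by an ordinal $\leq\omega_1$ is precisely the detail that makes this work, so nothing further is needed.
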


Since ccc linearly ordered spaces are first countable we get

\begin{corollary}
Every uncountable ccc linearly ordered topological space contains
a ccc subspace of size $\aleph_1$.
\end{corollary}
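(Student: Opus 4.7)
The plan is to reduce the corollary to Theorem \ref{firstcountable}: it suffices to show that every ccc linearly ordered topological space is first countable, since then the character is at most $\aleph_0 \leq \aleph_1$ and Theorem \ref{firstcountable} applies.

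I would fix a point $x$ in a ccc linearly ordered space $(X,<)$ and show that $x$ has a countable local base. The usual local base at $x$ consists of open intervals $(a,b)$ with $a < x < b$ (or one-sided intervals if $x$ is a maximum or minimum of $X$). Writing $L = \{y : y < x\}$ and $R = \{y : y > x\}$, it therefore suffices to show that the cofinality of $(L,<)$ is at most $\omega$ whenever $L$ is nonempty, and dually that the coinitiality of $(R,<)$ is at most $\omega$; countable cofinal and coinitial witnesses on the two sides combine to give a countable local base at $x$.

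For the cofinality of $L$ the argument is by contradiction. If it were at least $\omega_1$, I would extract a strictly increasing sequence $(y_\alpha)_{\alpha<\omega_1}$ cofinal in $L$ and set $U_\alpha = (y_{2\alpha}, y_{2\alpha+2})$ for each $\alpha < \omega_1$. Each $U_\alpha$ is nonempty because it contains $y_{2\alpha+1}$, and for $\alpha < \beta < \omega_1$ the inequality $y_{2\alpha+2} \leq y_{2\beta}$ yields $U_\alpha \cap U_\beta = \emptyset$; the resulting uncountable disjoint family of nonempty open sets contradicts ccc. The argument for $R$ is symmetric, and Theorem \ref{firstcountable} then produces the desired ccc subspace of size $\aleph_1$. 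There is no genuine obstacle here; one only has to handle the trivial cases where $L$ or $R$ is empty and to note that nonemptiness of the $U_\alpha$ is forced by strict monotonicity of the indexing sequence.
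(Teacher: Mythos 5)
Your proposal is correct and follows exactly the paper's route: the paper derives the corollary from Theorem \ref{firstcountable} by citing the fact that ccc linearly ordered spaces are first countable, which is precisely the reduction you make. The only difference is that you supply the standard proof of that folklore fact (uncountable cofinality on either side of a point would yield an uncountable disjoint family of open intervals), which the paper leaves implicit.
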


Our goal now is to prove a stronger version of Theorem
\ref{firstcountable} for the class of Hausdorff spaces (see
Theorem \ref{main} below). For this, recall that the
\emph{pointwise compactness type} of a space $X$, denoted by
$\mathrm{pct}(X)$, is the smallest infinite cardinal $\kappa$ for
which $X$ can be covered by compact subsets $K$ with $\chi(K,X)
\leq \kappa$. When $\mathrm{pct}(X)=\aleph_0$ we say that $X$ has
\emph{pointwise countable type}.\\

Fix a Hausdorff topological space $(X,\tau)$ and an elementary
submodel $M$ of $H(\theta)$ (for a large enough regular cardinal
$\theta$) such that $(X,\tau) \in M$. We let
$$\mathcal{K}=\{K\subseteq X: K \mbox{ is compact and } \chi(K,X)
 \leq \mathrm{pct}(X)\}.$$
By definition we have that $\mathcal{K}$ is a cover of $X$ and it
is not hard to see, using the fact that in a compact Hausdorff
space any open set is a union of compact $G_\delta$ subsets, that
$\mathcal{K}$ is a network of $X$ (i.e. any open subset of $X$ is
a union of elements of $\mathcal{K}$). Note also that
$\mathcal{K}$ is closed under finite intersections. Now for each
$p \in X \cap M$ we let $$\Delta_p=\bigcap \{ U \in \tau \cap M :
p \in U \}.$$

Note that by elementarity, since $X$ is Hausdorff, if $p,q \in X
\cap M$ are distinct then $\Delta_p \cap \Delta_q=\emptyset$.
Also, if $A$ is any intersection of open subsets of $X$ which are
in $M$ and $p \in A \cap M$ then clearly $\Delta_p \subseteq A$.
In particular this is true of any $A \in \mathcal{P}(X) \cap M$
for which $\chi(A,X) \subseteq M$.

\begin{lemma}\label{intersection}
If $p \in X \cap M$ and $\mathrm{pct}(X) \subseteq M$ then
$$\Delta_p=\bigcap \{K \in \mathcal{K} \cap M : p \in K \}.$$
\end{lemma}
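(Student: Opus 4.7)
The plan is to establish both containments separately, each by an elementarity argument, using the hypothesis $\mathrm{pct}(X) \subseteq M$ only for the inclusion $\Delta_p \subseteq \bigcap\{K \in \mathcal{K} \cap M : p \in K\}$.

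For that inclusion, I would simply apply the observation made in the paragraph preceding the lemma to $A = K$. Fix $K \in \mathcal{K} \cap M$ with $p \in K$; then $K \in M$ and $\chi(K,X) \leq \mathrm{pct}(X) \subseteq M$, so in particular $\chi(K,X) \subseteq M$. By elementarity, with $K$ as a parameter, $M$ contains a neighborhood base $\mathcal{B}$ at $K$ of cardinality $\chi(K,X)$; enumerating $\mathcal{B} = \{U_\alpha : \alpha < \chi(K,X)\}$ inside $M$ and using that every such $\alpha$ lies in $M$, one sees that each $U_\alpha$ itself lies in $M$. Since $X$ is Hausdorff and $K$ is compact, $K = \bigcap \mathcal{B}$. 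Each $U \in \mathcal{B}$ is then an open set in $M$ containing $p$, so $\Delta_p \subseteq U$, and hence $\Delta_p \subseteq K$.

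For the reverse inclusion, I would fix $x \in X$ with $x \notin \Delta_p$ and produce a witness $K \in \mathcal{K} \cap M$ with $p \in K$ and $x \notin K$. By definition of $\Delta_p$, there is $U \in \tau \cap M$ with $p \in U$ and $x \notin U$. Since $\mathcal{K}$ is definable in $H(\theta)$ from $X$ and $\mathrm{pct}(X)$, both of which lie in $M$ (the former by assumption, the latter as a definable ordinal), we have $\mathcal{K} \in M$. The network property of $\mathcal{K}$ gives, in $H(\theta)$, some $K \in \mathcal{K}$ with $p \in K \subseteq U$; since $\mathcal{K}, p, U$ all lie in $M$, elementarity supplies such a $K$ in $M$, and this $K$ avoids $x$.

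The only subtle step is the first direction, where it is essential that $\mathrm{pct}(X) \subseteq M$ and not merely $\mathrm{pct}(X) \in M$: without this, the elementarily chosen base $\mathcal{B}$ at $K$ would belong to $M$ only as a single object, its individual members might escape $M$, and the inclusion $\Delta_p \subseteq K$ would no longer follow from $\Delta_p \subseteq U$ for each $U \in \mathcal{B}$. The second direction, by contrast, uses only $X \in M$ together with the general fact that $\mathcal{K}$ is a network.
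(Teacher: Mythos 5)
Your proof is correct and follows essentially the same route as the paper: the forward inclusion uses $\chi(K,X)\subseteq M$ to pull an outer base at $K$ element-by-element into $M$ (exactly the observation the paper makes in the paragraph preceding the lemma), and the reverse inclusion uses elementarity plus the network property of $\mathcal{K}$ to find $K\in\mathcal{K}\cap M$ with $p\in K\subseteq U$ for each $U\in\tau\cap M$ containing $p$. You merely spell out details the paper compresses, so there is nothing further to add.
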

\begin{proof}
Note that $\psi\chi(K,X) \subseteq M$ for all $K \in \mathcal{K}$,
so $\Delta_p \subseteq \bigcap \{ K \in \mathcal{K} \cap M : p \in
K \}$. On the other hand, by elementary and the fact that
$\mathcal{K}$ is a network of $X$, for each $U \in \tau \cap M$
with $p \in U$ we can find $K_U \in \mathcal{K} \cap M$ such that
$p \in K_U \subseteq U$. But then $\bigcap \{ K \in \mathcal{K}
\cap M : p \in K \} \subseteq \bigcap\{K_U : p \in U \in \tau \cap
M\} \subseteq \bigcap \{ U : p \in U \in \tau \cap M\}= \Delta_p$,
which finishes the proof.
\end{proof}

As we mentioned before, the topology in $X \cap M$ generated by
$\tau \cap M$ is often strictly coarser than the subspace
topology. So given $U \in \tau$ and $p \in U \cap M$ there is no
guarantee that there is a $V \in \tau \cap M$ with $p \in V
\subseteq U$. However we have the following

\begin{lemma}\label{almostbase}
Suppose that $\mathrm{pct}(X) \subseteq M$. For any $U \in \tau$
and $p \in U \cap M$, if $\Delta_p \subseteq U$ then there is a $V
\in \tau \cap M$ such that $p \in V \subseteq U$.
\end{lemma}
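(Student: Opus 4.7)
The plan is to use Lemma \ref{intersection} to translate the hypothesis $\Delta_p \subseteq U$ into a statement about compact subsets in $\mathcal{K} \cap M$, and then exploit compactness to trade an infinite intersection for a single member of $\mathcal{K} \cap M$ lying inside $U$. Finally, I pass from a compact set in $\mathcal{K} \cap M$ back to an open neighborhood in $\tau \cap M$ using the hypothesis $\mathrm{pct}(X) \subseteq M$.

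First I would rewrite $\Delta_p = \bigcap \{K \in \mathcal{K} \cap M : p \in K\}$ using Lemma \ref{intersection}. The assumption $\Delta_p \subseteq U$ then says that the family of compact sets $\{K \cap (X \setminus U) : K \in \mathcal{K} \cap M,\ p \in K\}$ has empty intersection. By elementarity, since $\mathcal{K}$ covers $X$ and $\mathcal{K}, p \in M$, there is some $K_0 \in \mathcal{K} \cap M$ with $p \in K_0$. Within the compact Hausdorff space $K_0$, the closed sets $K_0 \cap K \cap (X \setminus U)$ (for $K \in \mathcal{K} \cap M$ with $p \in K$) have empty intersection, so finitely many $K_1,\dots,K_n$ already witness this. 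Because $\mathcal{K}$ is closed under finite intersections and $M$ is closed under definable finite operations, $K^{\ast} := K_0 \cap K_1 \cap \cdots \cap K_n$ lies in $\mathcal{K} \cap M$, contains $p$, and satisfies $K^{\ast} \subseteq U$.

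The remaining step is to produce an actual element of $\tau \cap M$. Since $K^{\ast} \in \mathcal{K}$, we have $\chi(K^{\ast},X) \leq \mathrm{pct}(X)$, so by elementarity there is an outer base $\mathcal{B} \in M$ of open neighborhoods of $K^{\ast}$ in $X$ with $|\mathcal{B}| \leq \mathrm{pct}(X)$. Using the hypothesis $\mathrm{pct}(X) \subseteq M$, one picks an enumeration of $\mathcal{B}$ by $\mathrm{pct}(X)$ inside $M$, and since every ordinal in the index set belongs to $M$, every element of $\mathcal{B}$ belongs to $M$ as well; thus $\mathcal{B} \subseteq \tau \cap M$. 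Because $K^{\ast} \subseteq U$ and $\mathcal{B}$ is an outer base of $K^{\ast}$, there is $V \in \mathcal{B}$ with $K^{\ast} \subseteq V \subseteq U$, and then $p \in V$ finishes the proof.

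The main obstacle is the very last step: all we get directly from compactness is a member of $\mathcal{K} \cap M$ inside $U$, not an open set of $M$. The trick is that the $\mathrm{pct}$ hypothesis not only gives such a compact set but equips it with a small outer base, and the assumption $\mathrm{pct}(X) \subseteq M$ is exactly strong enough to force every element of that outer base into $M$. This is where Hausdorffness (built into the definition of $\mathcal{K}$ through compactness of the members and used to guarantee that $\mathcal{K}$ is a network) and the full $\mathrm{pct}(X) \subseteq M$ hypothesis both play essential roles.
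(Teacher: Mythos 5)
Your proof is correct and follows essentially the same route as the paper's: rewrite $\Delta_p$ via Lemma \ref{intersection}, use compactness (the finite intersection property together with closure of $\mathcal{K}\cap M$ under finite intersections) to find a single $K \in \mathcal{K}\cap M$ with $p \in K \subseteq U$, and then use $\chi(K,X) \leq \mathrm{pct}(X) \subseteq M$ to pull an outer base of $K$ into $M$ and extract the desired $V$. The paper merely compresses your explicit finite-subfamily argument into the observation that a family of compact sets closed under finite intersections with empty total intersection must contain $\emptyset$.
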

\begin{proof}
Let $\mathcal{C}=\{ K \cap (X \setminus U) : K \in \mathcal{K}
\cap M, p \in K \}$. By Lemma \ref{intersection} we have that
$\bigcap \mathcal{C}=\Delta_p \cap (X \setminus U)=\emptyset$. But
$\mathcal{C}$ is a collection of compact subsets of $X$ closed
under finite intersections and therefore $\emptyset \in
\mathcal{C}$, so there is a $K \in \mathcal{K} \cap M$ with $p \in
K \subseteq U$. Since $\chi(K,X) \subseteq M$ by elementarity $K$
has an outer base entirely contained in $M$ and hence there is a
$V \in \tau \cap M$ such that $p \in K \subseteq V \subseteq U$.
\end{proof}

We are ready to prove our main result.

\begin{theorem}\label{main}
Let $X$ be a ccc Hausdorff space and $\kappa$ a cardinal such that
$\mathrm{pct}(X) \leq \kappa \leq |X|$. Then there is a ccc
subspace $Y \subseteq X$ with $|Y|=\kappa$.
\end{theorem}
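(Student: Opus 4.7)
The plan is to take an elementary submodel $M\prec H(\theta)$ for a sufficiently large regular $\theta$ with $|M|=\kappa$ and $\kappa\cup\{X,\tau\}\subseteq M$; since $\mathrm{pct}(X)\leq\kappa\subseteq M$, the hypotheses of Lemmas \ref{intersection} and \ref{almostbase} are satisfied. I set $Y=X\cap M$. For the cardinality, $|Y|\leq|M|=\kappa$; in the other direction $|X|\geq\kappa$ together with $\kappa\in M$ yields by elementarity an injection $f\colon\kappa\to X$ with $f\in M$, and since $\kappa\subseteq M$ each $f(\alpha)$ lies in $Y$, so $|Y|=\kappa$.

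For the ccc of $Y$, I argue by contradiction: assume $\{W_i:i<\omega_1\}$ is a pairwise disjoint family of nonempty open subsets of $Y$, write $W_i=U_i\cap Y$ with $U_i\in\tau$, and pick $p_i\in W_i$. The points $p_i$ are distinct, so the $\Delta_{p_i}$ are pairwise disjoint compact sets. The strategy is to produce, for each $i$, a $V_i\in\tau\cap M$ with $p_i\in V_i$ and $V_i\cap Y\subseteq W_i$. Granting this, $\{V_i\}$ is an uncountable pairwise disjoint family in $\tau$, contradicting ccc of $X$: indeed, if $V_i\cap V_j\neq\emptyset$ for some $i\neq j$, then this is a nonempty element of $\tau\cap M$, so by elementarity it contains a point of $M$, hence of $Y$, which would have to lie in $(V_i\cap Y)\cap(V_j\cap Y)\subseteq W_i\cap W_j=\emptyset$.

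By Lemma \ref{almostbase}, such a $V_i$ exists provided $\Delta_{p_i}\subseteq U_i$, and the compactness argument underlying Lemma \ref{almostbase} shows that this containment is in fact equivalent to the existence of some $K\in\mathcal{K}\cap M$ with $p_i\in K\subseteq U_i$. Since $\Delta_{p_i}\cap Y=\{p_i\}\subseteq W_i\subseteq U_i$ by Hausdorffness and elementarity, the only possible obstruction lies in phantom points of $\Delta_{p_i}\setminus Y\subseteq X\setminus M$ that escape $U_i$. I expect this to be the main technical hurdle: the plan is to secure the containment for an uncountable subfamily, either by rechoosing $p_i$ inside $W_i$ using that each point of $Y$ lies in abundantly many compact members of $\mathcal{K}\cap M$, or by a pigeonhole/$\Delta$-system argument on canonical elements $K_i\in\mathcal{K}\cap M$ with $p_i\in K_i$, reducing to a uniform case where $K_i\subseteq U_i$ and Lemma \ref{almostbase} applies verbatim.
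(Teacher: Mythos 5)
There is a genuine gap, and it sits exactly where you suspected. Your plan takes $Y=X\cap M$ and needs, for each $i$, some $V_i\in\tau\cap M$ with $p_i\in V_i$ and $V_i\cap Y\subseteq W_i$; as you note, Lemma \ref{almostbase} delivers this only when $\Delta_{p_i}\subseteq U_i$, equivalently when there is some $K\in\mathcal{K}\cap M$ with $p_i\in K\subseteq U_i$. But $U_i$ is an arbitrary open set not in $M$, and it can happen that \emph{every} point $p$ of $W_i$ has $\Delta_p\not\subseteq U_i$ and that \emph{no} member of $\mathcal{K}\cap M$ containing $p$ fits inside $U_i$; neither rechoosing $p_i$ within $W_i$ nor a $\Delta$-system/pigeonhole argument on the $K_i$'s addresses this, since the obstruction is pointwise and has nothing to do with uncountably many indices interacting. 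More tellingly, your argument, if completed, would show that $X\cap M$ itself is ccc whenever $X$ is compact Hausdorff and $\omega_1\cup\{X\}\subseteq M$ with $|M|=\aleph_1$ --- and that is precisely the question the paper poses as open in its final paragraph. So the missing step is not a routine technicality but the actual crux, and the subspace $X\cap M$ is the wrong candidate for $Y$.

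The paper's proof avoids this by not taking $Y=X\cap M$. It considers the family $\mathcal{C}$ of closed sets meeting every $\Delta_p$ ($p\in X\cap M$), uses compactness of the $\Delta_p$'s (Lemma \ref{intersection}) and Zorn's lemma to extract a \emph{minimal} $D\in\mathcal{C}$, and lets $Y$ be a selector of $\{D\cap\Delta_p: p\in X\cap M\}$, which is dense in $D$. The point of minimality is that it manufactures the hypothesis of Lemma \ref{almostbase} for you: if $U_\alpha$ meets $D$, then $D\cap(X\setminus U_\alpha)\notin\mathcal{C}$, so some $\Delta_{p_\alpha}$ is contained in the \emph{open} set $(X\setminus D)\cup U_\alpha$, and Lemma \ref{almostbase} applied to that set (not to $U_\alpha$) produces $V_\alpha\in\tau\cap M$ with $V_\alpha\cap D\subseteq U_\alpha\cap D$. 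From there the argument concludes as in your final step: two $V_\alpha$'s meet, elementarity gives a point $q\in V_\alpha\cap V_\beta\cap M$, and $\Delta_q\cap D\neq\emptyset$ forces $U_\alpha\cap U_\beta\cap D\neq\emptyset$. Your cardinality computation for $|X\cap M|$ is fine and reappears in the paper's count of the $y_p$'s, but the core ccc argument needs the minimal closed set $D$, which your proposal lacks.
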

\begin{proof}
Take $M$ such that $\kappa \subseteq M$ and $|M|=\kappa$. Consider
the set $$\mathcal{C}= \{C \subseteq X: C \mbox{ is closed and } C
\cap \Delta_p \neq \emptyset \mbox{ for all } p \in X \cap M \}$$
ordered by inclusion. By Lemma \ref{intersection} we have that
each $\Delta_p$ is compact which allows us to use Zorn's lemma to
get a minimal element $D$ of $\mathcal{C}$. We prove now that $D$
is ccc.

Fix a collection $\{U_\alpha : \alpha \in \omega_1 \} \subseteq
\tau$ such that $U_\alpha \cap D \neq \emptyset$ for all $\alpha
\in \omega_1$. For a given $\alpha \in \omega_1$ we have that $D
\cap (X \setminus U_\alpha)$ is closed and properly contained in
$D$. Hence by minimality of $D$ there is a $p_\alpha \in X \cap M$
such that $D \cap (X \setminus U_\alpha) \cap
\Delta_{p_\alpha}=\emptyset$ and therefore $\Delta_{p_\alpha}
\subseteq (X \setminus D) \cup U_\alpha$. Using Lemma
\ref{almostbase} we get a $V_\alpha \in \tau \cap M$ such that
$p_\alpha \in V_\alpha \subseteq (X \setminus D) \cup U_\alpha$.
Since $X$ is ccc there are $\alpha, \beta \in \omega_1$ such that
$V_\alpha \cap V_\beta \neq \emptyset$ so by elementary there is a
$q \in V_\alpha \cap V_\beta \cap M$. But then $U_\alpha \cap
U_\beta \cap D \supseteq V_\alpha \cap V_\beta \cap D \supseteq
\Delta_q \cap D \neq \emptyset$, so $\{U_\alpha \cap D : \alpha
\in \omega_1 \}$ is not a cellular family in $D$.

Now for each $p \in X \cap M$ choose a point $y_p \in D \cap
\Delta_p$ and let $Y=\{ y_p : p \in X \cap M \} \subseteq X$.
Since $\kappa \subseteq M$ and $|M|= \kappa$ we know that $|X \cap
M|=\kappa$. Moreover all the $\Delta_p$'s are disjoint so all the
$y_p's$ are different and hence $|Y|=\kappa$. Finally note that
$\overline{Y} \in \mathcal{C}$ so by minimality of $D$ we have
$\overline{Y}=D$ and therefore $Y$ is ccc.
\end{proof}

Since compact (and even locally compact) spaces have pointwise
countable type, we get

\begin{corollary}
Every uncountable ccc (locally) compact Hausdorff space contains a
ccc subspace of size $\aleph_1$.
\end{corollary}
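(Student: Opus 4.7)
The plan is to reduce the statement immediately to Theorem \ref{main}, applied with $\kappa = \aleph_1$. Since $X$ is assumed uncountable, $\aleph_1 \leq |X|$; once I check that $\mathrm{pct}(X) \leq \aleph_0$, Theorem \ref{main} hands me a ccc subspace $Y \subseteq X$ with $|Y| = \aleph_1$, which is exactly the corollary.

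For $X$ compact Hausdorff, verifying $\mathrm{pct}(X) \leq \aleph_0$ is essentially tautological: the one-element family $\{X\}$ covers $X$ by a compact set, and $\chi(X,X)=1$ since $X$ is the only open superset of itself in $X$. So $\mathrm{pct}(X) = \aleph_0$, and Theorem \ref{main} applies.

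For $X$ locally compact Hausdorff, the verification of $\mathrm{pct}(X) \leq \aleph_0$ is the only nontrivial step and thus the main obstacle of the argument. It is, however, a classical fact (alluded to in the sentence preceding the corollary) that every locally compact Hausdorff space is of pointwise countable type: for each $x \in X$ one produces a compact $K \ni x$ with a countable outer base in $X$, starting from a compact neighborhood of $x$ guaranteed by local compactness and using Hausdorff separation to shrink it appropriately. Granting this, Theorem \ref{main} again delivers the required ccc subspace of size $\aleph_1$, completing the corollary in both cases.
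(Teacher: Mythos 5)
Your proposal is correct and matches the paper's argument exactly: the corollary is obtained by applying Theorem \ref{main} with $\kappa=\aleph_1$ after noting that (locally) compact Hausdorff spaces have pointwise countable type, which is precisely the one-line justification the paper gives. Your extra verification that $\mathrm{pct}(X)=\aleph_0$ in the compact case, and your appeal to the classical fact in the locally compact case, just fill in details the paper leaves implicit.
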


Note that the Hausdorff condition was not needed in Theorem
\ref{firstcountable}. However, the following example shows that
Theorem \ref{main} fails for $T_1$ spaces even in the compact
case.

\begin{example}
For each regular cardinal $\kappa$, there is a ccc compact $T_1$
space of size $\kappa$ with no uncountable ccc subspace of size
less than $\kappa$.
\end{example}
\begin{proof}
Let $\tau_o$ be the usual order topology on the ordinal
$\kappa+1$. Let $X=\kappa$ with the topology $\tau=\{U \cap
\kappa: \kappa \in U \in \tau_o \}$. The space $X$ is clearly
$T_1$ and it is ccc because any two nonempty open subsets of $X$
intersect. Note that for any $\alpha \in \kappa$, the subspace
$\alpha \subseteq X$ has the usual order topology, so any open
subset of $X$ covers all but a compact subspace of $X$. Hence $X$
is compact. Since, by regularity of $\kappa$, any subspace $Y$ of
$X$ of size less than $\kappa$ is contained in some $\alpha \in
\kappa$, it follows that $Y$ has a topology finner than the one
induced by its order $(Y, \in)$ and therefore $Y$ is not ccc
unless $Y$ is countable.
\end{proof}

\begin{remark}
The previous are also examples of compact $T_1$ spaces with no
Lindel\"of subspace of size $\aleph_1$, which is of independent
interest.
\end{remark}

The space given in Example \ref{hausdorffexample} has
pseudocharacter $\kappa$ so taking $\kappa=\aleph_1$ we see that
$\chi(X) \leq \aleph_1$ in Theorem \ref{firstcountable} cannot be
weakened to $\psi\chi(X) \leq \aleph_1$ even for Hausdorff spaces.
Although the space in Example \ref{hausdorffexample} is not
regular, the following example shows that non-regularity is not
the problem.

\begin{example}
For each regular cardinal $\kappa$, there is a ccc Tychonoff space
$X$ of size $\kappa$ and countable pseudocharacter with no
uncountable ccc subspace of size less than $\kappa$.
\end{example}
\begin{proof}
Fix a function $\varphi: \kappa \to [\kappa]^{<\omega}$ such that
$|\varphi^{-1}(\{s\})|=\kappa$ for every $s \in
[\kappa]^{<\omega}$. Given $x \in 2^\kappa$ and $\alpha \in
\kappa$ we define $s(\alpha,x)=\{\xi \leq \alpha : x(\xi)=1\}$.
For $x \in 2^\kappa$ we let $G(x)=\{\alpha \in \kappa: x(\alpha)=1
\mbox{ and } s(\alpha,x) \mbox{ is finite}\}$. Now we let $$X=\{x
\in 2^\kappa : \exists \alpha \in G(x), \forall \beta>\alpha \
[x(\beta)=1 \leftrightarrow \varphi(\beta)=s(\alpha,x)]  \}$$ with
the subspace topology inherited from $2^\kappa$ (so $X$ is
Tychonoff).

First we show that $X$ is dense in $2^\kappa$ and therefore $X$ is
ccc being a dense subspace of a ccc space. For this, fix a finite
partial function $\sigma:\kappa \to 2$ and denote by $[\sigma]$
the basic clopen subset of $2^\kappa$ determined by $\sigma$. Pick
$\alpha \in \kappa$ with $\mathrm{dom}(\sigma)<\alpha$ and define
$x \in 2^\kappa$ by $\sigma \subseteq x$ and for $\beta \notin
\mathrm{dom}(\sigma)$:
$$x(\beta)=1 \iff \beta=\alpha \mbox{ or } \beta>\alpha \land
\varphi(\beta)=\{\alpha\}\cup \sigma^{-1}(\{1\}).$$ Then $x \in
[\sigma]$ and $s(\alpha,x)=\{\alpha\}\cup \sigma^{-1}(\{1\})$
hence $\alpha \in G(x)$ and $x \in [\sigma] \cap X$. Thus $X$ is
dense in $2^\kappa$.

Note that for each $x \in X$ we can choose $\alpha_x \in \kappa$
witnessing the fact that $x \in X$ and the map $x \mapsto
(\alpha_x, s(\alpha_x,x))$ is an injection from $X$ into $\kappa
\times [\kappa]^{<\omega}$ so $|X| \leq \kappa$. Moreover for each
$\alpha \in \kappa$ we can define $x_\alpha \in X$ by
$x_\alpha(\beta)=0$ if $\beta < \alpha$, $x_\alpha(\alpha)=1$ and
for $\beta>\alpha$, $x_\alpha(\beta)=1 \leftrightarrow
\varphi(\beta)=\{\alpha\}$. It is clear that all the $x_\alpha$'s
are distinct so $|X| \geq \kappa$ and hence $|X|=\kappa$.

To see that $\psi\chi(X)=\aleph_0$, fix $p \in X$ and let $A
\subseteq \kappa$ consist of the first $\omega$-many elements of
$\kappa$ for which $p$ takes the value $1$ (note that any element
of $X$ takes the value $1$ infinitely many times). Using the fact
that the map $x \mapsto (\alpha_x, s(\alpha_x,x))$ defined in the
previous paragraph is injective, it is easy to see that $p$ is the
only element of $X$ that takes value $1$ in all the ordinals in
$A$. But this condition defines a $G_\delta$ in $X$ so we are
done.

Finally, let $Y$ be any subspace of $X$ with $|Y|< \kappa$. We
will show that $Y$ is discrete and hence not ccc unless countable.
For each $y \in Y$ let $\alpha_y \in \kappa$ be a witness of the
fact that $y \in X$ and take $\delta=\sup\{\alpha_y: y \in Y\}$.
By regularity of $\kappa$, $\delta \in \kappa$. Now fix $y \in Y$
and choose $\beta \in \kappa$ such that $\beta > \delta$ and
$\varphi(\beta)=s(\alpha_y,y)$. This can be done since
$s(\alpha_y,y)$ is finite and therefore
$\varphi^{-1}(\{s(\alpha_y,y)\})$ is cofinal in $\kappa$. Let
$\sigma=\{(\beta,1)\}$ and note that, since $\beta > \delta \geq
\alpha_y$ and $\varphi(\beta)=s(\alpha_y,y)$, we have $y(\beta)=1$
and hence $y \in [\sigma] \cap Y$. On the other hand if $z \in
[\sigma] \cap Y$ then $z(\beta)=1$ and therefore, since $\beta >
\delta \geq \alpha_z$,
$s(\alpha_z,z)=\varphi(\beta)=s(\alpha_y,y)$. In particular
$\alpha_z=\max s(\alpha_z,z)=\max s(\alpha_y,y)=\alpha_y$. This
implies that $z \restriction (\alpha_z+1)=y \restriction
(\alpha_y+1)$. Since for all $\xi > \alpha_z=\alpha_y$ we have
$z(\xi)=1 \leftrightarrow \varphi(\xi)=s(\alpha_z,z)
\leftrightarrow \varphi(\xi)=s(\alpha_y,y) \leftrightarrow
y(\xi)=1$, it follows that $z=y$. Thus $[\sigma] \cap Y=\{y\}$,
and $Y$ is discrete.
\end{proof}

Using this example we can easily set up a situation where we have
a ccc Tychonoff space $X$ which contains a ccc subspace of size
$\aleph_1$ but $X$ is such that for any elementary submodel $M$
with $\omega_1 \cup \{X\} \subseteq M$ and $|M|=\aleph_1$, the
subspace $X \cap M$ is not ccc. For instance, just take the
disjoint union $X=Y \cup Z$ where $Y$ is any subspace of
$\mathbb{R}$ of size $\aleph_1$ and $Z$ is the space in the
previous example with $\kappa=\aleph_2$. Then $X \cap M = Y \cup
(Z\cap M)$ which is not a ccc space since $Z \cap M$ is an open
subspace which is not ccc. However we don't know the answer to the
following

\begin{question}
Suppose $X$ is a compact Hausdorff space and $M$ is an elementary
submodel with $\omega_1 \cup \{X\} \subseteq M$ and
$|M|=\aleph_1$. Is it true that the subspace $X \cap M$ is ccc?
\end{question}

\end{document}